\documentclass[12pt]{amsart}
\usepackage{float,hyperref}
\usepackage[utf8]{inputenc}
\usepackage[english]{babel}
\usepackage{tikz}
\usepackage{caption,enumerate}
\usepackage{amsmath}
\usepackage{amsthm}
\usepackage{pgfplots}
\usepackage{graphicx,subcaption}
\usepackage{fullpage}
\usepackage{cite}

\usetikzlibrary{plotmarks}
\usepackage{stix}
\newtheorem{theorem}{Theorem}
\newtheorem{lemma}{Lemma}
\newtheorem{definition}{Definition}
\newtheorem{proposition}{Proposition}

\newcommand\N{\mathbb{N}}

\title{Lattice point visibility on power functions}
\author{Pamela E. Harris}
\address{Department of Mathematics and Statistics, Williams College, United States}
\email{peh2@williams.edu}
\thanks{P.\,E. Harris was supported by NSF award DMS-1620202.}
\author{Mohamed Omar}
\address{Department of Mathematics and Statistics, Harvey Mudd College, United States}
\email{omar@g.hmc.edu}
\thanks{}
\keywords{}
\date{\today}

\begin{document}

\maketitle

\begin{abstract}
It is classically known that the proportion of lattice points visible from the origin via functions of the form $f(x)=nx$ with $n\in \mathbb{Q}$ is $\frac{1}{\zeta(2)}$ where $\zeta(s)$ is the classical Reimann zeta function. Goins, Harris, Kubik and Mbirika, generalized this and determined that the proportion of lattice points visible from the origin via functions of the form $f(x)=nx^b$ with $n\in \mathbb{Q}$ and $b\in\mathbb{N}$ is $\frac{1}{\zeta(b+1)}$. In this article, we complete the analysis of  determining the proportion of lattice points  that are visible via power functions with rational exponents, and simultaneously generalize these previous results. 
\end{abstract}

\section{Introduction}
In classical lattice point visibility, a point $(r,s)\in\mathbb{Z}\times\mathbb{Z}$ is said to be visible (from the origin) if there are no other integer lattice points on the line segment joining $(0,0)$ and $(r,s)$. One early result in this field showed that determining the  proportion of lattice points visible from the origin is equivalent to determining the probability that two integers are relatively prime, which is classically known to be $1/\zeta(2) = 6/\pi^2$, where \[\zeta(s) = \sum_{n=1}^\infty 1/n^s = \prod_{p\;\text{prime}}\left(1-1/p^s\right)^{-1}\] is the classical Riemann zeta function, as was first established (independently) by Ces\`{a}ro and Sylvester in 1883~\cite{Ces1883, Syl1883}.

Since the introduction of lattice point visibility by Herzog and Stewart in 1971 \cite{HerzogStewart}, the field and its generalizations continues to intrigue present day mathematicians \cite{Adhikari,Adhikari.vip,Apostol2000,Chen,Goins2017,Laishram,Laison,Nicholson,Schumer}. One recent generalization, by Goins, Harris, Kubik, and Mbirika, fixed a positive integer $b$ and defined a lattice point $(r,s)$ to be $b$-visible (from the origin) if the point lies on the graph of a power function $f(x)=nx^b$ with $n\in\mathbb{Q}$ and no other integer lattice point lies on this curve between $(0,0)$ and $(r,s)$  \cite{Goins2017}. Note that when $b=1$ this is the classical lattice point visibility setting. One of their main results (\cite[Theorem 1]{Goins2017}) established that the proportion of $b$-visible integer lattice points is given by $1/\zeta(b+1)$.
In this short note, we complete the analysis by determining the proportion of visible lattice points when the lines of sight are power functions with rational exponents. 
Our main result is as follows.

\begin{theorem}[Main Theorem]\label{thm:main}
Fix a rational $b/a>0$ with $\gcd(a,b)=1$. Let $\N=\{1,2,3,\ldots\}$ and $\mathbb{N}_a$ be the set of integers of the form $\ell^a$ with $\ell \in \mathbb{N}$. Then the proportion of points in $\mathbb{N}_a\times\mathbb{N}$ that are $(b/a)$-visible is $\frac{1}{\zeta(b+1)}$, and the proportion of points in $\mathbb{N}_a\times\mathbb{N}$ that are
$(-b/a)$-visible is $\frac{1}{\zeta(b)}$.
\end{theorem}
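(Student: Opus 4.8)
The plan is to reduce $(\pm b/a)$-visibility of a point $(\ell^{a},s)\in\N_{a}\times\N$ to an explicit divisibility condition on the pair $(\ell,s)$, and then to compute the resulting density by M\"obius inversion together with the Euler product for $\zeta$. Everything rests on one elementary observation: if a positive integer $r'$ satisfies $(r')^{b/a}\in\mathbb{Q}$, then $r'$ is a perfect $a$-th power, since $((r')^{b/a})^{a}=(r')^{b}\in\N$ forces $(r')^{b/a}\in\N$, and then comparing $p$-adic valuations using $\gcd(a,b)=1$ forces $a\mid v_{p}(r')$ for every prime $p$. Now a point $(\ell^{a},s)$ lies on $f(x)=nx^{b/a}$ precisely when $n=s/\ell^{b}\in\mathbb{Q}$; by the observation, every lattice point on this curve has the form $(m^{a},sm^{b}/\ell^{b})$, and it lies strictly between $(0,0)$ and $(\ell^{a},s)$ iff $1\le m<\ell$ and $\ell^{b}\mid sm^{b}$. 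Writing $d=\gcd(m,\ell)$, $\ell=d\ell_{1}$ (so $\ell_{1}>1$ since $m<\ell$) and cancelling, $\ell^{b}\mid sm^{b}\iff \ell_{1}^{b}\mid s$; hence such an $m$ exists iff some prime $p$ has $p\mid\ell$ and $p^{b}\mid s$ (for the converse take $m=\ell/p$). Thus $(\ell^{a},s)$ is $(b/a)$-visible iff no prime $p$ satisfies $p\mid\ell$ and $p^{b}\mid s$ --- exactly the condition of \cite{Goins2017} with $\ell$ in place of $r$.

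For the negative exponent, $(\ell^{a},s)$ lies on $f(x)=nx^{-b/a}$ with $n=s\ell^{b}$, and the same observation shows the lattice points on this curve are the $(m^{a},s\ell^{b}/m^{b})$ with $m^{b}\mid s\ell^{b}$. Under the natural extension of the definition --- for which the arc of this (decreasing) curve relevant to the visibility of $(\ell^{a},s)$ is the bounded one on which $x>\ell^{a}$, equivalently $0<y<s$ --- the blocking lattice points are exactly those with $m>\ell$ and $m^{b}\le s\ell^{b}$. Here $m>\ell$ forces a prime $p$ with $v_{p}(m)>v_{p}(\ell)$, and then $m^{b}\mid s\ell^{b}$ forces $p^{b}\mid s$; conversely, if $p^{b}\mid s$ then $m=\ell p$ is such a blocker. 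Hence $(\ell^{a},s)$ is $(-b/a)$-visible iff $s$ is free of $b$-th powers, a condition that does not involve $\ell$ at all.

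Finally, I would pass from these characterizations to densities. Reading the proportion as the limiting density of the ``good'' pairs $(\ell,s)$ in a box $[1,M]\times[1,N]$ as $M,N\to\infty$ (in the application $M\asymp N^{1/a}$, $N\to\infty$), a standard inclusion--exclusion over the events $\{p\mid\ell,\ p^{b}\mid s\}$ gives, in the positive case, a good-pair count $\sum_{d}\mu(d)\lfloor M/d\rfloor\lfloor N/d^{b}\rfloor = MN/\zeta(b+1)+o(MN)$, using $\sum_{d}\mu(d)d^{-(b+1)}=1/\zeta(b+1)$; dividing by $MN$ yields $\prod_{p}(1-p^{-(b+1)})=1/\zeta(b+1)$. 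In the negative case the visibility condition involves only $s$, so the proportion equals the density of $b$-th-power-free integers, which is $\prod_{p}(1-p^{-b})=1/\zeta(b)$ for $b\ge 2$ by the same argument, while for $b=1$ the only $1$-free integer is $1$ and the density is $0=1/\zeta(1)$. I expect the crux to be the first step --- correctly pinning down which lattice points on the (possibly non-integer, possibly decreasing) curve count as lying between $(0,0)$ and $(r,s)$, and proving the two prime characterizations; the density estimates are then routine, requiring only mild care because the two dimensions of the box may grow at different rates.
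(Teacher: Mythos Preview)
Your argument is correct and follows the same overall architecture as the paper: reduce $(\pm b/a)$-visibility of $(\ell^{a},s)$ to a divisibility condition on $(\ell,s)$, then compute the density of pairs satisfying that condition. The characterizations you obtain---``no prime $p$ with $p\mid\ell$ and $p^{b}\mid s$'' in the positive case, and ``$s$ is $b$-th-power-free'' in the negative case---are exactly the paper's Propositions~1--3, though you prove them more explicitly via $p$-adic valuations and the $\gcd$ decomposition rather than by the paper's slightly terser contrapositive arguments. The one genuine methodological difference is in the density step: the paper argues heuristically via ``independence of divisibility by different primes'' and passes directly to the Euler product $\prod_{p}(1-p^{-(b+1)})$, whereas you use M\"obius inversion to write the count as $\sum_{d}\mu(d)\lfloor M/d\rfloor\lfloor N/d^{b}\rfloor$ and extract the main term. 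Your route is more rigorous (it makes the error term explicit and handles the aspect ratio $M\asymp N^{1/a}$ cleanly), and it also lets you treat the degenerate case $b=1$ with negative exponent, where the density is $0=1/\zeta(1)$; the paper's Euler-product heuristic would require separate justification there since the product diverges.
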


We note that we consider the density of visible points with respect to the set $\mathbb{N}_a\times\mathbb{N}$ because a point $(r,s)$ lies on the graph of the function $f(x)=nx^{b/a}$ with $n,b/a\in\mathbb{Q}$
only when $r=\ell^a$ for some integer $\ell\in\mathbb{N}$. If we instead considered the density of visible points with respect to $\N \times \N$, the density would be $0$.  Indeed, since the points $(r,s)$ that are visible must have that $r$ is an $a$-th power, the visible points are a subset of the set $\{(\ell^a,s) \ : \ \ell,s \in \N\}$.  This set has density $\frac{\sqrt[a]{N}}{N}$ if we restrict to points in the grid $[N] \times [N]$ (here $[N]:=\{1,2,\ldots,N\}$) and this tends to $0$ as $N \to \infty$ for values $a \geq 2$. However, when $a=1$, $\N_a = \N$ so there is no difference.

\section{Main Result}\label{sec:2}
It is important to note that the graph of $f(x)=nx^{b/a}$ with $n,b/a\in\mathbb{Q}$ passes through the origin only when $b/a>0$. In this case, we continue to consider lattice point visibility from the origin. In the case where $b/a<0$, we define visibility from a point at ``infinity.'' That is, since $f(x)=nx^{b/a}$ monotonically decreases to $0$ as $x$ goes to positive infinity we think of visibility from the point at infinity on the positive $x$-axis. We make these definitions precise shortly, but illustrate the concepts in Figure \ref{fig:two_lines_of_sights} where we provide lines of sight $f_1(x)=3x^{1/2}$ in blue and $f_2(x)=10x^{-1}$ in red. Note that the point $(1,3)$ is the only visible point on $f_1(x)$ (from the origin) and $(10,1)$ is the only visible point on $f_2(x)$ (from infinity).

\begin{figure}[h!]
\centering
\begin{tikzpicture}
\begin{axis}[nodes near coords,
    axis lines = left,
    xlabel = {},
    ylabel = {},
    xmin=0, xmax=12,
    ymin=0, ymax=12,
    xtick={1,2,3,4,5,6,7,8,9,10,11},
    ytick={1,2,3,4,5,6,7,8,9,10,11},
    xmajorgrids=true,
    ymajorgrids=true,
    scatter/classes={
        c={mark=*,draw=black,fill=black},
        d={mark=o,draw=black}
}
]

\addplot [
    domain=0:12, 
    samples=100, 
    color=red,
]
{10/x};
\addplot [
    domain=0:12, 
    samples=100, 
    color=blue,
    ]
    {3*sqrt(x)};

  \addplot [
         only marks,
        point meta=explicit symbolic,
    ] coordinates {
        (1,10) [c] 
        (2,5) [c]
        (5,2) [c]
        (10,1) [d]
        (1,3) [d]
        (4,6) [c]
        (9,9) [c]
    };
\end{axis}
\end{tikzpicture}
\caption{Lines of sight $f_1(x) =  3 x^{1/2}$ (in blue) and $f_2(x) =10x^{-1}$ (in red) with visible (unfilled) and invisible (filled) points.}
\label{fig:two_lines_of_sights}
\end{figure}
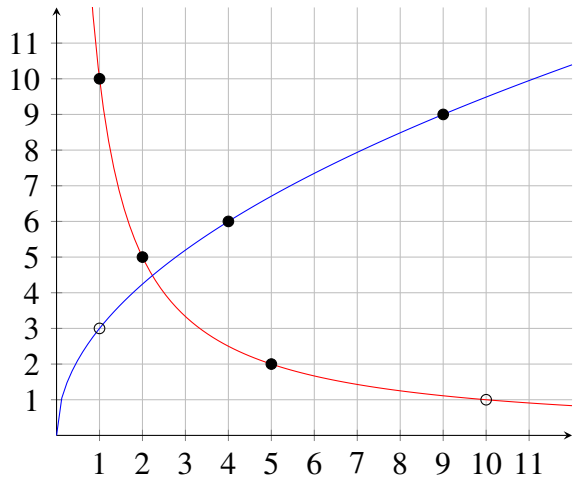

We begin by recalling the definition of a lattice point being $b$-visible when $b\in\mathbb{N}$.

\begin{definition}\label{def:bvisible} 
Fix $b\in \mathbb{N}$. A point $(r,s)\in \mathbb{N}\times\mathbb{N}$ is said to be \textit{$b$-visible} if it lies on the graph of $f(x)=nx^b$ for some $n\in \mathbb{Q}$ and there does not exist another point in $\mathbb{N} \times \mathbb{N}$ on the graph of $f(x)$ lying between $(0,0)$ and $(r,s)$.
\end{definition}

Observe that for a fixed value of $b$, the point $(r,s)\in\N\times\N$ lies on exactly one power function $f(x)=nx^b$, the one in which $n=s/r^b$.  This makes the previous definition well-defined because for a fixed $b$ it is impossible for the point $(r,s)$ to be $b$-visible with respect to one such power function and not $b$-visible with respect to another (since there is only one such function $f$). A similar observation holds for power functions with more general exponents (see Definition \ref{def:2} and Definition~\ref{def:3}).

Moreover, suppose $(r,s)$ on the graph of $f(x)=nx^b$ is $b$-visible.  Then any other point in $\mathbb{N} \times \mathbb{N}$ on the graph of $f$ has larger $y$-coordinate (when $f(x)$ is graphed in the $xy$-plane) because $f$ is monotonically increasing.  This observation holds for Definition \ref{def:2} as well, but is slightly different for Definition \ref{def:3}, as we address below.
Such a perspective will be useful in our subsequent proofs.

We will also need the following proposition that gives a number-theoretic characterization of a point being $b$-visible.

\begin{proposition}\label{prop:1}
Fix $b\in\N$. Then the lattice point $(r,s) \in \mathbb{N} \times \mathbb{N}$ is $b$-visible if and only if $s=nr^b$ for some $n\in\mathbb{Q}$ and there does not exists a prime $p$ such that $p|r$ and $p^b|s$.
\end{proposition}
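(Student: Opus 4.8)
The plan is to prove the equivalence by contraposition in both directions, using the observation recorded just above the proposition: since $f(x)=nx^b$ is monotonically increasing on $[0,\infty)$ and passes through the origin, a point of $\mathbb{N}\times\mathbb{N}$ lying on the graph of $f$ strictly between $(0,0)$ and $(r,s)$ is precisely a pair $(r',s')$ with $s'=n(r')^b\in\mathbb{N}$ and $0<r'<r$. I will also note at the outset that the hypothesis ``$s=nr^b$ for some $n\in\mathbb{Q}$'' is automatic, realized by $n=s/r^b$, so the substance of the statement is the equivalence of $b$-visibility with the non-existence of a prime $p$ with $p\mid r$ and $p^b\mid s$.

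For the direction ``if such a prime exists, then $(r,s)$ is not $b$-visible,'' I would simply exhibit the obstructing lattice point. Given a prime $p$ with $p\mid r$ and $p^b\mid s$, set $r':=r/p$ and $s':=s/p^b$; these are positive integers, $r'<r$, and $s'=s/p^b=(nr^b)/p^b=n(r/p)^b=n(r')^b$, so $(r',s')$ lies on the graph of $f$ strictly between $(0,0)$ and $(r,s)$, contradicting $b$-visibility.

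For the converse, suppose $(r,s)$ is not $b$-visible, so there is a lattice point $(r',s')$ on the graph of $f$ with $0<r'<r$. From $s=nr^b$ and $s'=n(r')^b$ one obtains the integer identity $s\,(r')^b=s'\,r^b$. Writing $g=\gcd(r,r')$ and $r=gv$, $r'=gu$ with $\gcd(u,v)=1$ and $1\le u<v$, cancellation of $g^b$ yields $s\,u^b=s'\,v^b$. Since $\gcd(u^b,v^b)=1$, this forces $v^b\mid s$; and since $u<v$ gives $v\ge 2$, any prime $p$ dividing $v$ satisfies $p\mid v\mid r$ and $p^b\mid v^b\mid s$, which is the prime required.

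I do not anticipate a serious obstacle here: the only point that needs care — and the reason I route everything through monotonicity of $f$ — is the justification that ``between $(0,0)$ and $(r,s)$ on the graph'' translates exactly to ``$0<r'<r$'' for the $x$-coordinate; once that is in hand, the argument reduces to the coprime-factorization manipulation above. An alternative packaging via $p$-adic valuations is possible, but the $\gcd$ bookkeeping seems cleanest and is what I would write up.
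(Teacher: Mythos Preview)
Your proof is correct and follows essentially the same approach as the paper: both directions are argued by contraposition, the forward direction exhibits the obstructing point $(r/p,s/p^b)$, and the backward direction reduces $r'/r$ to lowest terms (your $u/v$ is the paper's $\alpha/\beta$) and extracts a prime from the denominator. Your explicit invocation of monotonicity to translate ``between'' into $0<r'<r$ is a nice clarification that the paper leaves implicit.
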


Proposition \ref{prop:1} is useful in the computations leading up to the proof of Theorem \ref{thm:main}. The statement in this proposition is equivalent to the definition of $b$-visibility in \cite{Goins2017}, but the link between the intuitive definition provided in Definition \ref{def:bvisible} and the mathematical implications of Proposition~\ref{prop:1} was not established. We provide a proof of this result for sake of completion.

\begin{proof}[Proof of Proposition \ref{prop:1}]
Suppose $(r,s) \in \N \times \N$ lies on the curve $f(x)=nx^{b}$.  

For the forward direction, we prove the contrapositive and suppose that there is a prime $p$ for which $p|r$ and $p^b|s$.  Then the point $(r/p,s/p^b)$ lies on the graph of $f(x)=nx^b$ and is between $(0,0)$ and $(r,s)$. Thus $(r,s)$ is not $b$-visible.

For the backward direction, we prove the contrapositive and suppose that $(r,s)$ is not $b$-visible. So there exists $(r',s')\in\N\times\N$ on the curve $f(x)=nx^b$ and lying between (0,0) and $(r,s)$. Observe that
\[s'=f(r')=n(r')^b=\frac{s(r')^b}{r^b}.\]
Since $s'<s$, it must be the case that $\frac{r'}{r}<1$ so we can write $\frac{r'}{r}$ as a fraction $\frac{\alpha}{\beta}$ where $\alpha,\beta \in \N$, $\gcd(\alpha,\beta)=1$ and $\beta \geq 2$.  Furthermore, we can assume $\alpha|r'$ and $\beta|r$.  
From this, $s' = \frac{s \alpha^b}{\beta^b}.$
Since $\beta \geq 2$ it has some prime factor $p$.  Since $s'$ is an integer and $\gcd(\alpha,\beta)=1$, it must be the case that $p^b$ divides $s$.  Furthermore, since $p|\beta$, $p|r$.
\end{proof}

For the remainder of this article we assume that $b/a$ is rational with $\gcd(a,b)=1$. A natural definition for $(b/a)$-visibility when $b/a>0$ is as follows.

\begin{definition}\label{def:2}
Fix $b/a>0$. Suppose the point $(r,s)$ lies on the curve $f(x)=nx^{b/a}$.  The point $(r,s)$ is said to be $(b/a)$-visible if there does not exist another point in $\mathbb{N} \times \mathbb{N}$ on the graph of $f(x)$ lying between $(0,0)$ and $(r,s)$.
\end{definition}

Similar to how Proposition \ref{prop:1} gave a number-theoretic characterization of Definition \ref{def:bvisible}, a similar paradigm occurs for Definition \ref{def:2} and Proposition \ref{prop:2}.

\begin{proposition}\label{prop:2}
Fix $b/a>0$. Then the lattice point $(r,s) \in \mathbb{N} \times \mathbb{N}$ is $(b/a)$-visible if and only if $s=nr^{b/a}$ for some $n \in \mathbb{Q}$, $r=\ell^a$ for some $\ell\in\mathbb{N}$, and $(\ell,s)$ is $b$-visible.
\end{proposition}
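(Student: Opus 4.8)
The plan is to reduce $(b/a)$-visibility of a point on the curve $f(x)=nx^{b/a}$ to ordinary $b$-visibility of a companion point on the curve $g(x)=nx^{b}$, by means of the substitution $x\mapsto x^{1/a}$ applied to the integer lattice points that actually lie on these curves.

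First I would establish the constraint $r=\ell^{a}$. If $(r,s)\in\mathbb{N}\times\mathbb{N}$ lies on $f(x)=nx^{b/a}$ with $n\in\mathbb{Q}$, then $r^{b/a}=s/n\in\mathbb{Q}$. Since $\gcd(a,b)=1$, there are integers $u,v$ with $ua+vb=1$, so $r^{1/a}=r^{u}\,(r^{b/a})^{v}\in\mathbb{Q}$; as $r$ is a positive integer, $r^{1/a}$ is then a positive integer $\ell$, giving $r=\ell^{a}$. This is exactly the phenomenon described in the remark following Theorem \ref{thm:main}, and it tells us more: the integer lattice points on the graph of $f$ are precisely the points $((\ell')^{a},\,n(\ell')^{b})$ with $\ell'\in\mathbb{N}$ and $n(\ell')^{b}\in\mathbb{N}$. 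Writing $n=s/\ell^{b}$, the point $(\ell,s)$ lies on $g(x)=nx^{b}$, and the integer lattice points on $g$ are the points $(\ell',\,n(\ell')^{b})$ with $\ell'\in\mathbb{N}$ and $n(\ell')^{b}\in\mathbb{N}$.

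Next I would set up the bijection $((\ell')^{a},n(\ell')^{b})\leftrightarrow(\ell',n(\ell')^{b})$ between the integer lattice points on $f$ and those on $g$, and check that it respects the relevant ``windows''. Because $f$ and $g$ are both monotonically increasing on $x>0$, a lattice point on $f$ (resp.\ on $g$) lies strictly between $(0,0)$ and $(r,s)$ (resp.\ between $(0,0)$ and $(\ell,s)$) if and only if its $x$-coordinate is strictly between $0$ and $r=\ell^{a}$ (resp.\ strictly between $0$ and $\ell$); and since $x\mapsto x^{a}$ is strictly increasing, $0<(\ell')^{a}<\ell^{a}$ if and only if $0<\ell'<\ell$. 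Hence there is a point of $\mathbb{N}\times\mathbb{N}$ on $f$ strictly between $(0,0)$ and $(r,s)$ if and only if there is a point of $\mathbb{N}\times\mathbb{N}$ on $g$ strictly between $(0,0)$ and $(\ell,s)$. Feeding this equivalence into Definitions \ref{def:bvisible} and \ref{def:2}, and noting that $n=s/\ell^{b}\in\mathbb{Q}$ automatically witnesses that $(\ell,s)$ lies on the power function $nx^{b}$, yields the claimed characterization; the forward direction starts from the hypothesis that $(r,s)$ lies on some $f(x)=nx^{b/a}$ with $n\in\mathbb{Q}$, which already forces $r=\ell^{a}$.

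I expect the main obstacle to be a matter of careful bookkeeping rather than of depth: precisely matching the phrase ``lying between $(0,0)$ and $(r,s)$'' across the two curves via the monotonicity observation, and arguing cleanly (using $\gcd(a,b)=1$) that $r$ must be a perfect $a$-th power. Once the bijection and the monotonicity statement are in place, the proof is essentially immediate. One could instead bypass the bijection by invoking Proposition \ref{prop:1} directly, but routing through $b$-visibility of $(\ell,s)$ keeps the statement self-contained and sets up the divisibility bookkeeping that will be convenient in the proof of Theorem \ref{thm:main}.
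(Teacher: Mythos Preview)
Your proof is correct and follows essentially the same approach as the paper's: both establish that $r$ must be a perfect $a$-th power, then set up the correspondence $((\ell')^{a},s')\leftrightarrow(\ell',s')$ between integer lattice points on $f(x)=nx^{b/a}$ and on $g(x)=nx^{b}$, and use monotonicity to transfer the ``between $(0,0)$ and the point'' condition. The only notable difference is that you justify the step $r^{b/a}\in\mathbb{Q}\Rightarrow r=\ell^{a}$ explicitly via B\'ezout, whereas the paper simply asserts it.
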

 
\begin{proof}
Notice first that if $(r,s)$ is $(b/a)$-visible then $r=\ell^a$ for some positive integer $\ell$ because $ r^{b/a}=s/n$ is rational. Now, if $(r',s')$ is another point with integer coordinates on the graph of $f(x)=nx^{b/a}$, then similarly $r'=(\ell')^a$ for some positive integer $\ell'$.  We observe that $(\ell',s')$ lies on $g(x)=nx^b$.  Now notice that $(\ell',s')$ lies on the graph of $g(x)=nx^b$ between $(0,0)$ and $(\ell,s)$ if and only if $(r',s')$ lies on the graph of $f(x)=nx^{b/a}$ between $(0,0)$ and $(r,s)$.  So $(r,s)$ is $(b/a)$-visible if and only if $(\ell,s)$ is $b$-visible.
\end{proof}

We now determine the proportion of $(b/a)$-visible points in $\N_a \times \N$ for $b/a>0$.

\begin{lemma}\label{lem:2}
Fix $b/a>0$. Then the proportion of points in $\mathbb{N}_a\times\mathbb{N}$ that are $(b/a)$-visible is $\frac{1}{\zeta(b+1)}$.
\end{lemma}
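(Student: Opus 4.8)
The plan is to transport the question to ordinary $b$-visibility using Proposition~\ref{prop:2} and then carry out a direct density count. By Proposition~\ref{prop:2}, the assignment $(\ell^a,s)\mapsto(\ell,s)$ is a bijection $\mathbb{N}_a\times\mathbb{N}\to\mathbb{N}\times\mathbb{N}$ under which $(b/a)$-visible points correspond exactly to $b$-visible points, and (restricting attention to points in the grid $[N]\times[N]$, as discussed after Theorem~\ref{thm:main}) it identifies the points of $\mathbb{N}_a\times\mathbb{N}$ lying in $[N]\times[N]$ with the pairs $(\ell,s)$ satisfying $\ell\le N^{1/a}$ and $s\le N$. Hence it suffices to prove that
\[
\lim_{N\to\infty}\frac{1}{\lfloor N^{1/a}\rfloor\,N}\,\#\bigl\{(\ell,s)\ :\ \ell\le N^{1/a},\ s\le N,\ (\ell,s)\text{ is }b\text{-visible}\bigr\}=\frac{1}{\zeta(b+1)}.
\]
Observe that $b\ge1$, since $\gcd(a,b)=1$ and $b/a>0$ force $a,b\in\mathbb{N}$.

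First I would fix $\ell$ and count the admissible values $s\le N$. By Proposition~\ref{prop:1}, $(\ell,s)$ is $b$-visible precisely when no prime $p\mid\ell$ satisfies $p^{b}\mid s$, so inclusion--exclusion over the squarefree divisors $d$ of $\mathrm{rad}(\ell)$ yields
\[
\#\{s\le N:(\ell,s)\text{ is }b\text{-visible}\}=\sum_{d\mid\mathrm{rad}(\ell)}\mu(d)\Bigl\lfloor\frac{N}{d^{\,b}}\Bigr\rfloor=g(\ell)\,N+O\!\bigl(2^{\omega(\ell)}\bigr),\qquad g(\ell):=\prod_{p\mid\ell}\Bigl(1-\frac{1}{p^{\,b}}\Bigr),
\]
where $\omega(\ell)$ is the number of distinct prime divisors of $\ell$. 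Summing over $\ell\le M:=\lfloor N^{1/a}\rfloor$ and using $\sum_{\ell\le M}2^{\omega(\ell)}\le\sum_{\ell\le M}\tau(\ell)=O(M\log M)$, the count in the displayed limit equals $N\sum_{\ell\le M}g(\ell)+O(M\log M)$; since $M\log M=o(NM)$, this error vanishes after dividing by $\lfloor N^{1/a}\rfloor N$.

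It then remains to show $\frac{1}{M}\sum_{\ell\le M}g(\ell)\to\frac{1}{\zeta(b+1)}$ --- that is, that the mean value of the multiplicative function $g$ equals the expected Euler product. I would make this rigorous by writing $g=\mathbf{1}*h$ for the multiplicative function determined by $h(p)=-p^{-b}$ and $h(p^{k})=0$ for $k\ge2$ --- equivalently $h(d)=\mu(d)/d^{\,b}$ --- and swapping the order of summation:
\[
\sum_{\ell\le M}g(\ell)=\sum_{d\le M}\frac{\mu(d)}{d^{\,b}}\Bigl\lfloor\frac{M}{d}\Bigr\rfloor=M\sum_{d\le M}\frac{\mu(d)}{d^{\,b+1}}+O\!\Bigl(\sum_{d\le M}d^{-b}\Bigr).
\]
Since $b\ge1$, the portion of $\sum_{d\ge1}\mu(d)d^{-b-1}$ beyond $M$ contributes $O(M^{\,1-b})=O(1)$ after multiplying by $M$, and $\sum_{d\le M}d^{-b}=o(M)$, so $\frac{1}{M}\sum_{\ell\le M}g(\ell)\to\sum_{d=1}^{\infty}\mu(d)d^{-(b+1)}=1/\zeta(b+1)$, completing the argument.

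I expect the only real obstacle to be the bookkeeping around the double limit hidden in the skewed box $\{\ell\le N^{1/a},\ s\le N\}$: one has to confirm that the per-$\ell$ remainders $O(2^{\omega(\ell)})$, summed over the short range $\ell\le N^{1/a}$, are swallowed by the main term $N\cdot M/\zeta(b+1)$ --- which is precisely where the size gap between $M=N^{1/a}$ and $N$ is used --- and that the mean-value step rests on the honest convolution identity above rather than on a formal Euler-product manipulation. (Taking $a=1$ recovers, and gives an alternative proof of, \cite[Theorem~1]{Goins2017}.)
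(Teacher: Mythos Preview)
Your argument is correct and takes a more explicitly analytic route than the paper's. After the same reduction via Proposition~\ref{prop:2}, the paper works probabilistically: it picks $(r,s)$ uniformly from $[N]_a\times[N]$, computes for each prime $p$ the probability $P_{p,N}$ that $p\mid\ell$ and $p^b\mid s$, appeals to the (asymptotic) independence of divisibility by distinct primes to form the product $\prod_{p\le N}(1-P_{p,N})$, and lets $N\to\infty$ to obtain $\prod_p(1-p^{-(b+1)})=1/\zeta(b+1)$. You instead fix $\ell$, count admissible $s$ by M\"obius inclusion--exclusion to isolate the multiplicative weight $g(\ell)=\prod_{p\mid\ell}(1-p^{-b})$, and then average $g$ over $\ell\le N^{1/a}$ via the convolution identity $g=\mathbf{1}*(\mu(\cdot)/(\cdot)^{b})$. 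What your approach buys is rigor: the ``independence of primes'' step in the paper is the standard heuristic in this literature (cf.\ \cite{Goins2017,Pinsky}), whereas your explicit error terms---$O(M\log M)$ from $\sum_{\ell\le M}2^{\omega(\ell)}$ and $O(\sum_{d\le M}d^{-b})$ from the divisor swap---make the limit honest without any independence assumption. The paper's version, on the other hand, is shorter and makes immediately visible \emph{why} the Euler product for $\zeta(b+1)$ is the answer.
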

\begin{proof}

Define $[N]:=\{1, 2, \ldots, N\} $ and $ [N]_a := \{1^a,2^a,\ldots,{\lfloor \sqrt[a]{N} \rfloor}^a\}.$  Let $r,s$ be two numbers picked independently with uniform probability in $[N]_a$ and $[N]$ respectively, and fix a prime $p$ in  $[N]$. Let $P_{p,N}$ denote the probability that 
$r=\ell^a$ for some $\ell\in\N$, $p$ divides $\ell$ and $p^b$ divides~$s$. By Propositions \ref{prop:1} and \ref{prop:2}, and the independence of divisibility by different primes, it suffices to compute $\displaystyle \lim_{N \rightarrow \infty} \prod_{\substack{p\;\text{prime}\\p\leq N}} \left( 1 - P_{p,N} \right)$.

The integers $r \in [N]_a$ for which $p$ divides $r$ are precisely the integers in $\{1, 2, \ldots, {\lfloor \sqrt[a]{N} \rfloor}\}$ that are divisible by $p$, because $\ell^a$ is divisible by $p$ if and only if $\ell$ is.  The number of such integers is $ \left\lfloor\frac{\lfloor \sqrt[a]{N} \rfloor}{p}\right\rfloor$.  Thus the probability that $r \in [N]_a$ and $p$ divides $\ell$ is $\frac{1}{\lfloor \sqrt[a]{N} \rfloor}\left\lfloor\frac{\lfloor \sqrt[a]{N} \rfloor}{p}\right\rfloor$.
There are $\left\lfloor{\frac{N}{p^b}}\right\rfloor$ integers in $[N]$ that are divisible by $p^b$; namely $p^b, 2p^b, \ldots, \left\lfloor{\frac{N}{p^b}}\right\rfloor p^b$. Thus the probability that $p^b$ divides $s$ is $\frac{1}{N}\left\lfloor{\frac{N}{p^b}}\right\rfloor$. 

By mutual independence, the probability that $(r,s) \in [N]_a \times [N]$ has $p$ dividing $r$ and $p^b$ dividing $s$ is $P_{p,N} = \frac{1}{N \lfloor \sqrt[a]{N} \rfloor} \left\lfloor{\frac{\lfloor\sqrt[a]{N}\rfloor}{p}}\right\rfloor \left\lfloor{\frac{N}{p^b}}\right\rfloor$. Therefore, the probability that the pair $(r,s) \in [N]_a \times [N]$ has $p$ not dividing $r$, or $p^b$ not dividing $s$ is $1 - P_{p,N}$.  Since $P_{p,N} \rightarrow \frac{1}{p^{b+1}}$ as $N \rightarrow \infty$, by multiplying over all of the primes we have that the probability that all primes $p$ satisfy that $p$ does not divide $r$ or that $p^b$ does not divide $s$ is
\begin{equation*}\label{eqprime_new}
\displaystyle \lim_{N \rightarrow \infty} \prod_{\substack{p\;\text{prime}\\p\leq N}} \left( 1 - P_{p,N} \right) = 
\prod_{p\;\text{prime}}  \left(1-\frac{1}{p^{b+1}}\right)=\frac{1}{\zeta(b+1)}.\qedhere
\end{equation*}
\end{proof}

In determining the density of visible points, we computed the limit as $N \to \infty$ of densities in the rectangles $[N]_a \times [N]$.  One might suspect that determining the density by approximating $\N_a \times \N$ by other regions might give a different limit.  Though this might be the case, our approach is consistent with similar density computations throughout the literature.  For such examples, see \cite{Goins2017,Pinsky}.

We now consider rational exponents $-b/a<0$.
Note that the corresponding power functions $f(x)=nx^{-b/a}$ with $n\in\mathbb{Q}$ do not go through the origin. 
To stay consistent with the pictorial interpretation that a visible lattice point should obstruct the visibility of all lattice points behind it, we think of viewing lattice points from $(\infty,0)$ instead of $(0,0)$. 
In this case, a visible point will consequently have a $y$-coordinate that is minimal among all lattice points lying on the graph of  $f(x)=nx^{-b/a}$. 
This is because $f(x)$ is monotonically decreasing. Note that one could consider augmenting the perspective by viewing visibility from $(0,\infty)$ instead.  
This will recover an integer point whose $x$-coordinate is minimal.  However, by replacing $-b/a$ with $-a/b$, this becomes equivalent to our perspective of viewing lattice points from~$(\infty,0)$.

\begin{definition}\label{def:3}
Fix $-b/a<0$. Suppose the point $(r,s)$ lies on the curve $f(x)=nx^{-b/a}$.  The point $(r,s)$ is said to be $(-b/a)$-visible if there does not exist another point in $\mathbb{N} \times \mathbb{N}$ on the graph of $f(x)$ lying between $(r,s)$ and $(\infty,0)$.
\end{definition}

We begin analyzing $(-b/a)$-visibility when $a=1$ with the following number-theoretic characterization of $(-b)$-visibility.

\begin{proposition}\label{prop:3}
Let $b$ be a positive integer. Then the lattice point $(r,s) \in \mathbb{N} \times \mathbb{N}$ is $(-b)$-visible from $(\infty,0)$ if and only if there does not exist a prime $p$ such that $p^b|s$.
\end{proposition}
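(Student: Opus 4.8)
The plan is to mirror the proof of Proposition \ref{prop:1}, but with the roles of $r$ and $s$ essentially swapped, since the line of sight now emanates from $(\infty,0)$ rather than $(0,0)$ and the governing curve $f(x)=nx^{-b}$ is monotonically decreasing. Fix $(r,s)\in\N\times\N$ lying on $f(x)=nx^{-b}$, so that $n=sr^b\in\Q$. I would argue both directions by contrapositive, as was done for Proposition \ref{prop:1}.

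For the forward direction, suppose there is a prime $p$ with $p^b\mid s$. Then consider the point $(rp, s/p^b)$: since $f(rp)=n(rp)^{-b}=sr^b\cdot r^{-b}p^{-b}=s/p^b$, this point has integer coordinates and lies on the graph of $f$. Moreover $rp>r$ while $s/p^b<s$ (as $p^b\geq 2$), so $(rp,s/p^b)$ lies strictly between $(r,s)$ and $(\infty,0)$ along the curve, witnessing that $(r,s)$ is not $(-b)$-visible. Notice that, unlike the $b\in\N$ case, here I must also check that there is no constraint forcing $r$ to be a perfect power — there is none, because $n=sr^b$ is automatically rational for any $r,s\in\N$, which is exactly why the density statement for $a=1$ does not restrict the $x$-coordinate.

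For the backward direction, suppose $(r,s)$ is not $(-b)$-visible, so there is another integer point $(r',s')$ on the graph of $f$ lying between $(r,s)$ and $(\infty,0)$; by monotonicity this means $r'>r$ and $s'<s$. From $s'=f(r')=n(r')^{-b}=s r^b/(r')^b$ we get $s'/s = (r/r')^b$. Since $r/r'<1$, write $r/r'=\alpha/\beta$ in lowest terms with $\alpha,\beta\in\N$, $\gcd(\alpha,\beta)=1$, and $\beta\geq 2$, where we may take $\alpha\mid r$ and $\beta\mid r'$. Then $s' = s\alpha^b/\beta^b$, and since $s'$ is an integer with $\gcd(\alpha^b,\beta^b)=1$, we must have $\beta^b\mid s$; picking any prime $p\mid\beta$ gives $p^b\mid\beta^b\mid s$, which is the desired conclusion. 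This is the step that requires the most care, but it is a direct adaptation of the corresponding argument in Proposition \ref{prop:1}. The only genuine conceptual point to get right is the direction of the inequalities — that moving toward $(\infty,0)$ along a decreasing curve means increasing $x$ and decreasing $y$ — and once that is fixed the divisibility bookkeeping goes through verbatim.
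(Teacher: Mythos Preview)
Your proof is correct and follows exactly the approach the paper intends: the paper omits the proof entirely, stating only that it is ``very similar to that of Proposition~\ref{prop:1},'' and your argument is precisely that adaptation, with the roles of increasing/decreasing and the obstructing point $(rp,s/p^b)$ in place of $(r/p,s/p^b)$ handled correctly.
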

\begin{proof}The proof is very similar to that of Proposition \ref{prop:1} so we omit it.
\end{proof}

We now determine the proportion of $(-b)$-visible points in $\N \times \N$.

\begin{lemma}\label{lem:3}
Fix an integer $-b<0$. Then the proportion of points in  $\mathbb{N}\times\mathbb{N}$ that are $(-b)$-visible is~$\frac{1}{\zeta(b)}$.
\end{lemma}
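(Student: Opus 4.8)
The plan is to mirror the proof of Lemma \ref{lem:2}, but using the number-theoretic characterization of $(-b)$-visibility from Proposition \ref{prop:3} in place of Proposition \ref{prop:1}. The key simplification is that $(-b)$-visibility of $(r,s)$ depends \emph{only} on $s$: the point is $(-b)$-visible if and only if there is no prime $p$ with $p^b \mid s$, i.e.\ $s$ is $b$-th-power-free. So the density computation effectively reduces to a one-variable problem.

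First I would set $[N]:=\{1,2,\ldots,N\}$ and, following the template of Lemma \ref{lem:2}, pick $r,s$ independently and uniformly from $[N]\times[N]$; since the condition on $(r,s)$ involves only $s$, the $r$-coordinate plays no role, and I can equivalently just pick $s$ uniformly from $[N]$. For a fixed prime $p \le N$, let $Q_{p,N}$ denote the probability that $p^b \mid s$. As in Lemma \ref{lem:2}, there are exactly $\lfloor N/p^b \rfloor$ multiples of $p^b$ in $[N]$, so $Q_{p,N} = \frac{1}{N}\lfloor N/p^b\rfloor \to 1/p^b$ as $N\to\infty$. By Proposition \ref{prop:3} and the mutual independence of divisibility by distinct prime powers $p^b$ (for distinct primes $p$), the proportion of $(-b)$-visible points is
\begin{equation*}
\lim_{N\to\infty}\prod_{\substack{p\ \text{prime}\\ p\le N}}\left(1 - Q_{p,N}\right) = \prod_{p\ \text{prime}}\left(1 - \frac{1}{p^b}\right) = \frac{1}{\zeta(b)}.
\end{equation*}

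The routine points to address are the same as in Lemma \ref{lem:2}: justifying that one may pass the limit inside the infinite product (the finite partial products converge, and $Q_{p,N}$ is uniformly controlled, e.g.\ $Q_{p,N} \le 1/p^b$ after accounting for the floor, so standard dominated-convergence-type arguments for infinite products apply), and recalling the Euler product $\prod_p (1-p^{-b})^{-1} = \zeta(b)$, which is valid and convergent since $b \ge 1$; indeed for $b=1$ one should note $\zeta(1)$ diverges, giving density $0$, consistent with the formula, though the interesting content is $b \ge 2$. I expect no genuine obstacle here — the main (very minor) subtlety is simply observing that the characterization involves only $s$, so the factor $[N]$ for the $r$-coordinate cancels and the computation collapses to the classical density of $b$-th-power-free integers. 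One could even cite the classical fact that the density of $b$-th-power-free integers is $1/\zeta(b)$ directly, but reproducing the short product argument keeps the note self-contained and parallel to Lemma \ref{lem:2}.
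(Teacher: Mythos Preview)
Your proposal is correct and follows essentially the same approach as the paper: both use Proposition~\ref{prop:3} to reduce to the condition that no prime $p$ satisfies $p^b\mid s$, compute $Q_{p,N}=\frac{1}{N}\lfloor N/p^b\rfloor\to 1/p^b$, and take the Euler product to obtain $1/\zeta(b)$. Your additional remarks (the irrelevance of the $r$-coordinate, the dominated-convergence justification, and the $b=1$ degenerate case) go slightly beyond what the paper spells out but are consistent with its argument.
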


\begin{proof}
Fix a prime $p$ in $[N]$ and let $s$ be a number picked independently with uniform probability in $[N]$. Let $P_{p,N}$ denote the probability that $p^b$ divides $s$.
By Proposition \ref{prop:3}, and the independence of divisibility by different primes, it suffices to compute $\displaystyle \lim_{N \rightarrow \infty} \prod_{\substack{p\;\text{prime}\\p\leq N}} \left( 1 - P_{p,N} \right)$.

There are $\left\lfloor{\frac{N}{p^b}}\right\rfloor$ integers in $[N]$ that are divisible by $p^b$; namely $p^b, 2p^b, \ldots, \left\lfloor{\frac{N}{p^b}}\right\rfloor p^b$. Thus the probability that $p^b$ divides $s$ is $\frac{1}{N}\left\lfloor{\frac{N}{p^b}}\right\rfloor$. Therefore, the probability that the pair $(r,s) \in [N] \times [N]$ has $p^b$ not dividing $s$ is $1 - P_{p,N}$.  Since $P_{p,N} \rightarrow \frac{1}{p^{b}}$ as $N \rightarrow \infty$, by multiplying over all of the primes we have that the probability that all primes $p$ satisfy that $p^b$ does not divide $s$ is
\begin{equation*}
\displaystyle \lim_{N \rightarrow \infty} \prod_{\substack{p\;\text{prime}\\p\leq N}} \left( 1 - P_{p,N} \right) = 
\prod_{p\;\text{prime}}  \left(1-\frac{1}{p^{b}}\right)=\frac{1}{\zeta(b)}.\qedhere
\end{equation*}
\end{proof}

We now prove our main theorem.

\begin{proof}[Proof of Theorem \ref{thm:main}]
By Lemmas \ref{lem:2} and \ref{lem:3} the only remaining case to consider is negative non-integer exponents $-b/a$ where $a>1$. Suppose we have a point $(r,s)$ that is $(-b/a)$-visible.  Then it would need to be the case that there is some rational $n$ such that $s=nr^{-b/a}$, which is equivalent to $sr^{b/a}=n$.  Consequently $r=\ell^a$ for some positive integer $\ell$.  From this, $s\ell^b=n$.  By an argument similar to that in Proposition \ref{prop:2}, $(r,s)$ will then be $(-b/a)$-visible if and only if $(\ell,s)$ is $(-b)$-visible.  A similar argument as in Lemma \ref{lem:2} extends Lemma \ref{lem:3} to give us a density of $\frac{1}{\zeta(b)}$.
\end{proof}

\end{document}